\DeclareMathOperator{\dist}{\text{\rm dist}}
\newcommand{\ka}{{\kappa}}
\newcommand{\h}{{\mathcal H}}
\newtheorem{claim}{claim}[section]
\newtheorem{theorem}[claim]{Theorem}
\newtheorem{lemma}[claim]{Lemma}
\theoremstyle{remark}\newtheorem*{remark}{Remark}
\theoremstyle{definition}
\def\cw{{\mathcal W}}
\def\cs{{\mathcal S}}
\def\dist{{\rm dist\,}}
\newcommand{\Bh}{\mathbf{B}(\mathcal{H})}
\newcommand{\Rf}{\operatorname{Ref}}
\def\b{\mathbf{B}}
\def\Fk{\mathbf{F}_k}
\def\F1{\mathbf{F}_1}
\def\A11{{\mathbb{A}_1(1)}}
\def\B1h{\mathbf{B}_1 (\mathcal{H})}
\numberwithin{equation}{section}
\title[ Power partial isometries]{$2$--hyperreflexivity and hyporeflexivity\\ of power partial isometries}
\author{Kamila Piwowarczyk}
\author{Marek Ptak}
\address{Kamila Piwowarczyk, Department of Applied  Mathematics, University of Agriculture, ul. Balicka 253c, 30-198 Krak\'ow, Poland}
\email{kamila.piwowarczyk@ur.krakow.pl}
\address{Marek Ptak, Department of Applied Mathematics, University of Agriculture, ul. Balicka 253c, 30-198 Krak\'ow, Poland and
Institute of Mathematics, Pedagogical University, ul.
Podchor\c{a}\.{z}ych 2, 30-084 Krak\'ow, Poland}
\email{rmptak@cyf-kr.edu.pl}
\begin{document}

\maketitle

{\bf Abstract.} Power partial isometries are not always hyperreflexive neither reflexive. In the present paper it will be shown that  power partial isometries are always hyporeflexive and $2$--hyperreflexive.

{\bf Keywords:} Power partial isometry, reflexive subspace, hyperreflexive subspace, hyperreflexive operator, hyporeflexive algebra.

{\bf Mathematics Subject Classification:}
47L80; 47L45, 47L05.

\section{Introduction}
Reflexivity and  hyperreflexivity of operator algebras on Hilbert spa\-ces is connected with the problem of existence of a nontrivial invariant subspace.
An algebra of operators is reflexive \cite{[Sa]} if it has so many (common) invariant subspaces that they determine the algebra. It means that if any operator leaves invariant all subspaces which are invariant for all operators from the algebra, then it has to belong to the algebra. Equivalently, rank one operators contained in the preannihilator of an algebra  generate the whole preannihilator. Hyporeflexivity \cite{[HN]} of an algebra of operators (weaker property than reflexivity) means that if any operator from the commutant of the given algebra leaves invariant all subspaces, which are invariant for all operators from the algebra, then it  has to belong to the algebra.
An algebra of operators is hyperreflexive \cite{[Ar1]} (much stronger property than reflexivity) if the usual distance from any operator to the algebra
can be controlled by the distance given by rank one operators.
Replacing rank one operators by operators of rank at most $k$ in the corresponding conditions we obtain
the concepts of  $k$--reflexivity \cite{[Az]} and  $k$--hyperreflexivity \cite{[KP]} as natural generalizations of reflexivity and  hyperreflexivity.

A power partial isometry is an operator for which  all its powers are partial isometries.
In \cite{[ALMP]} full characterization of reflexivity of an algebra generated by  completely non--unitary power partial isometries was given.
In \cite{[PP]} it was shown that the same conditions given in \cite{[ALMP]} characterize hyperreflexive algebras generated by power partial isometries.
In the present paper we will show that algebras generated by power partial isometries are hyporeflexive, $2$--reflexive and even $2$-hyperreflexive.

\section{Preliminaries}

Let $\Bh$ denote the algebra of all bounded linear operators on a~complex separable Hilbert space $\h$.
For a cardinal number $d$ let $\h^{(d)}$ denote the orthogonal sum
of $\h$ with itself $d$ times. If $T\in \Bh$, then $T^{(d)}$ is the orthogonal sum of $T$ with itself $d$ times and for
$\cs\subset \Bh$ we denote $\cs^{(d)}=\{T^{(d)}\colon T\in \cs\}$.
Duality between trace class operators $\B1h$ and the
algebra $\Bh$ is given by trace, i.e.
$\langle T,f \rangle = \text{tr}\,(Tf)$ for $T \in \Bh,\; f \in \B1h$.
By $\Fk(\h)$ we denote the set of operators of rank at most $k$, $k \in \mathbb{N}$.
For a subset $\cs \subset \Bh$  by $\cs_{\bot}$ we
denote the preannihilator of $\cs$, i.e.
$\cs_{\bot} =\{f\in \B1h: \langle T, f\rangle =0 \text{ for all } T\in \cs\}$.

Let $\cs \subset \Bh$ be a subspace (i.e. a norm--closed linear manifold) and let $T\in \Bh$.
A subspace $\cs$ is \textit{reflexive} (\cite{[LS]}) if $\cs = \Rf \cs\overset{df}= \{ A\in \Bh: Ax \in [\cs x] \,\text{ for all}\, x\in\h \}$. (By $[\mathcal{M}]$ we denote the smallest closed subspace containing $\mathcal{M}$, in the appropriate space and topology.)
Longstaff in \cite{[Lo]} proved that a weak$^*$--closed subspace $\cs \subset \Bh$ is reflexive if and only if $\cs_\bot=[\cs_\bot \cap \F1(\h)]$.
A subspace $\cs$ is $k$--\textit{reflexive} \cite{[Az]} if $\cs^{(k)}$ is reflexive in $\b(\h^{(k)})$.
Kraus and Larson in \cite {[KL]} gave equivalent condition, namely a weak$^*$--closed subspace $\cs \subset \Bh$ is $k$--reflexive if and only if $\cs_\bot=[\cs_\bot \cap \Fk(\h)]$.

If $T\in\Bh$ by $\dist (T,\cs)$ we denote the usual distance from $T$ to $\cs$, namely $\dist (T,\cs)=\inf\{\|T-S\|: S\in \cs\}$. In what follows we will also consider the distances $\alpha_k(T,\cs)= \sup\{|\langle T, f \rangle | : f \in \cs_{\bot} \cap \Fk(\h), \|f\|_1 \leqslant 1 \}$, see \cite{[KP]}.
Recall that $\alpha_k(T,\cs) \leqslant \dist (T,\cs)$, $k\in\mathbb{N}$. A subspace $\cs$ is called $k$--\textit{hyperreflexive}
(\cite{[Ar1], [Ar2], [KP], [KL]})
if there is a constant $\ka > 0$ such that \[\dist (T,\cs) \leqslant \kappa\, \alpha_k (T,\cs)\quad\text{for all\ }T\in\Bh.\]
The infimum of all constants $\ka$ fulfilling this inequality
is called the $k$--\textit{hyperreflexivity constant} and denoted by $\ka_k(\cs)$.
We omit the letter $k$ if $k=1$ and say that $\cs$ is hyperreflexive.
An operator $A \in \Bh$ is called $k$--\textit{reflexive} ($k$--\textit{hyperreflexive}) if $\cw (A)$ is $k$--reflexive ($k$--hyperreflexive) where $\cw (A)$ denotes the smallest algebra containing polynomials in $A$ and closed in the weak operator topology.

Recall that the  \textit{unilateral shift} is the  operator  $a_s \in \mathbf{B}(l_+^2)$ defined as $a_s(x_0, x_1, \dots)=(0, x_0, x_1, \dots)$.
The \textit{backward shift} $a_c$ is its adjoint $a_c(x_0, x_1, \dots)=(x_1, x_2, \dots)$.
For a $k$-dimensional Hilbert space $H_k$ (isomorphically identified with $\mathbb{C}^k$) a {\it truncated shift} (Jordan block) $a_k \in \mathbf{B}(H_k)$ {\it of order} $k$, $1 \leqslant k < \infty$, is defined as $a_k(x_0, x_1, \dots, x_{k-1})=(0, x_0, x_1, \dots, x_{k-2})$.

Recall that $V\in \Bh$ is a {\it power partial isometry } if all its powers $V^n, n\in \mathbb{N}$, are partial isometries. The operators  $a_s, a_c, a_k$ are examples of power partial isometries. Moreover, they appear in the model of a power partial isometry. Recall after \cite{[HW]}

\begin{theorem} \label{hwdecompo}
Let $V\in \Bh$ be a power partial isometry. There exist subspaces
$\h_u(V)$, $\h_s(V)$, $\h_c(V)$, $\h_t(V) \subset \h$ such that
$\h_u(V)$, $\h_s(V)$, $\h_c(V)$, $\h_t(V)$ reduce $V$ and \\
$V_u= V|_{\h_{u}(V)}$ is a unitary operator,\\
$V_s= V|_{\h_{s}(V)}$ is a unilateral shift of arbitrary multiplicity,\\
$V_c= V|_{\h_{c}(V)}$ is a backward shift of arbitrary multiplicity,\\
$V_t= V|_{\h_{t}(V)}$ is possibly infinite orthogonal sum of truncated shifts
and
\begin{equation}\label{decompo}
V=V_u \oplus V_s \oplus V_c \oplus V_t.
\end{equation}
This decomposition is  unique.
\end{theorem}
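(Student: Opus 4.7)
The plan is to follow the original Halmos--Wallen strategy. Put $P_n := V^{*n}V^n$ and $Q_n := V^nV^{*n}$; by hypothesis each is an orthogonal projection. The crucial first step is to establish that the family $\{P_n, Q_m : n,m \geqslant 0\}$ consists of mutually commuting projections. Because $V^{n+1}x = V^n(Vx)$, the ranges of the $V^n$ decrease, hence $Q_{n+1} \leqslant Q_n$; a similar argument with the adjoint gives $P_{n+1} \leqslant P_n$. Far more delicate, and the technical core of the theorem, is the relation $P_n Q_m = Q_m P_n$: I would derive it by exploiting the partial-isometry identities $V^n = V^nV^{*n}V^n$ simultaneously for several exponents, together with a manipulation of $V^{n+m}$ written in two different ways.

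With commutation in hand, set $P_\infty := \operatorname{s-lim}_n P_n$ and $Q_\infty := \operatorname{s-lim}_n Q_n$. The four commuting projections $P_\infty Q_\infty$, $P_\infty(I-Q_\infty)$, $(I-P_\infty)Q_\infty$, $(I-P_\infty)(I-Q_\infty)$ lie in the von~Neumann algebra generated by $V$, so they reduce $V$. On the first summand $V^{*n}V^n = V^nV^{*n} = I$ for every $n$, so $V$ is unitary; this is $\h_u(V)$. On the second, $V$ is an isometry with $V^nV^{*n} \downarrow 0$, so by the Wold decomposition $V$ is a unilateral shift of some multiplicity; this is $\h_s(V)$. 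The third summand is handled symmetrically by passing to the adjoint and gives the backward shift summand $\h_c(V)$. The fourth summand $\h_t(V)$ is where both $P_n$ and $Q_n$ tend to $0$; here I would refine the decomposition using the differences $P_{n-1}-P_n$ and $Q_{m-1}-Q_m$, which are again mutually commuting projections by the first step, and identify each joint eigenspace with an orthogonal sum of truncated shifts whose order is determined by the pair $(n,m)$ for which a vector first lands in $\ker V^{*n}\cap \ker V^m$.

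Uniqueness follows at once, since $\h_u,\h_s,\h_c,\h_t$ are intrinsically determined by $V$ through $P_\infty,Q_\infty$ and the joint spectrum of $\{P_n,Q_m\}$. The main obstacle is unquestionably the commutation $P_n Q_m = Q_m P_n$: although suggested by the partial-isometry identities, it is not automatic from any single power and genuinely requires the hypothesis that \emph{every} $V^n$ is a partial isometry. A secondary obstacle is the bookkeeping on $\h_t(V)$, where one must correctly match the ranks of the joint eigenprojections $(P_{n-1}-P_n)(Q_{m-1}-Q_m)$ to the multiplicities of truncated shifts of each order.
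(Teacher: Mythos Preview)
The paper does not prove this theorem at all: it is quoted verbatim from Halmos and Wallen \cite{[HW]} and used as a black box. There is therefore no ``paper's own proof'' to compare your proposal against; your outline is in fact a sketch of the original Halmos--Wallen argument.

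That said, your sketch contains one genuine error. You assert that the four projections $P_\infty Q_\infty$, $P_\infty(I-Q_\infty)$, $(I-P_\infty)Q_\infty$, $(I-P_\infty)(I-Q_\infty)$ ``lie in the von~Neumann algebra generated by $V$, so they reduce $V$.'' This implication is false: membership in $W^*(V)$ does \emph{not} imply commutation with $V$. For instance, if $V$ is the unilateral shift then $Q_1=VV^*\in W^*(V)$, yet $Q_1V=V\neq VQ_1$. What you actually need is that $P_\infty$ and $Q_\infty$ lie in the commutant $\{V,V^*\}'$, and this requires a separate argument using intertwining relations between $V$ and the individual $P_n$, $Q_m$ (roughly, that $V$ shifts the grading by one step), not merely the mutual commutation of the $P_n$ and $Q_m$ among themselves. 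Once that is established, the remainder of your plan (Wold decomposition on $\h_s$, the adjoint trick for $\h_c$, and the block analysis via $(P_{n-1}-P_n)(Q_{m-1}-Q_m)$ on $\h_t$) is the correct route.
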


In the following paper the theorem above will be the starting point in the main proofs. As we can realize "the proper" behaviour of reflexivity and hyperreflexivity as to orthogonal sums and heredity to subspaces  will be needed.

\begin{remark} \label{2}
In \cite{[KP],[KL],[LS]} may be found theorems, which deal with heredity of hyperreflexivity for subspaces and property $\mathbb{A}_1(r)$. This results were presented in our context in \cite[Proposition 2.2]{[PP]}.
\end{remark}

\begin{remark} \label{3}
Combining \cite[Theorem 6.16]{[Ha1]}, \cite[Theorem 5.1]{[KP]}, \cite[Theorems 3.8, 4.1]{[HN]} we get theorem, which deals with orthogonal sums of algebras and operators in the context of  hyperreflexivity and property $\mathbb{A}_1(r)$. The combined version can be found in \cite[Proposition 2.3]{[PP]}.
\end{remark}

In both Remarks above property $\mathbb{A}_1(r)$ were used.
Recall after \cite{[BFP]} that linear manifold $\cs \subset \Bh$ has \textit{property $\mathbb{A}_1$} if for any weak$^*$--continuous functional $\phi$ on $\cs$ there are $x$, $y\in \h$  such that $\phi(S)=\text{tr}(S\,(x\otimes y))$ for all $S \in \cs$. (By $(x \otimes y)$ we denote rank one operator  defined as $(x \otimes y)z=(z,y)x$ for $z \in \h$.) It is said that $\cs$ has \textit{property $\mathbb{A}_1(r)$}, $r\geqslant 1$, if $\cs$ has property $\mathbb{A}_1$ and for any $\varepsilon > 0$ vectors $x,y$ can be chosen such that $\| x\otimes y\|_1 \leqslant (r+\varepsilon) \|\phi\| $.

\section{Power partial isometries are $2$--hyperreflexive}

\begin{theorem}\label{hreppi}
If $V\in \Bh$ is a power partial isometry, then $V$ is $2$--hyperreflexive.
\end{theorem}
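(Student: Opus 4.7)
The natural strategy is to apply the Halmos--Wallen decomposition of Theorem~\ref{hwdecompo}, writing $V = V_u \oplus V_s \oplus V_c \oplus V_t$, and then to prove $2$--hyperreflexivity componentwise and recombine using the orthogonal sum machinery summarized in Remark~\ref{3}. This is the framework already used for the hyperreflexivity results of \cite{[PP]}, so I would follow the same blueprint but push it one ampliation further.

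The first step is to check each of the four building blocks individually. The algebra $\cw(V_u)$ sits inside an abelian von Neumann algebra and is therefore hyperreflexive with property $\mathbb{A}_1(1)$. The algebra $\cw(V_s)$ is unitarily equivalent to $H^\infty$ acting on a vector-valued Hardy space, and is hyperreflexive with $\mathbb{A}_1(1)$ by classical results. The algebra $\cw(V_c)$ generated by the backward shift requires a bit more care, since $V_c$ is not an isometry, but its analysis reduces to that of a finite-dimensional truncation or to the adjoint setting, where again hyperreflexivity with $\mathbb{A}_1(1)$ holds. Finally, $\cw(V_t)$ is generated by an orthogonal sum of truncated shifts; each Jordan block individually gives a finite-dimensional hyperreflexive algebra with $\mathbb{A}_1(1)$, and the infinite orthogonal sum is handled by Remark~\ref{3}.

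The heart of the argument is the orthogonal sum step for the whole decomposition. Even though each of $\cw(V_u)$, $\cw(V_s)$, $\cw(V_c)$, $\cw(V_t)$ is hyperreflexive with $\mathbb{A}_1(1)$, the full sum $\cw(V)$ need not inherit $\mathbb{A}_1(1)$ — this is precisely the reason that $1$--hyperreflexivity fails in general for power partial isometries and some extra assumption was needed in \cite{[ALMP],[PP]}. The key observation is that property $\mathbb{A}_1(1)$ on $\cw(V)^{(2)}$ (i.e.\ after ampliating by $2$) corresponds, on the original space, to the ability to realize every weak${}^*$--continuous functional by an element of $\cw(V)_\bot \cap \mathbf{F}_2(\h)$ with comparable norm. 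After ampliating once, the four pieces can be packed into two "slots" so that the missing cross terms between, say, the unitary and shift parts are supplied by rank-one operators in $\mathbf{B}(\h^{(2)})$, which correspond to rank-two operators on $\h$.

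The main obstacle I expect is precisely this control of the constants after the single ampliation: one must verify that the ampliated algebra $\cw(V)^{(2)}$ has property $\mathbb{A}_1(r)$ for some $r$ independent of the multiplicities appearing in $V_s, V_c, V_t$, and that the hyperreflexivity constants of the individual pieces remain uniformly bounded across infinite orthogonal sums of truncated shifts of different orders. Once this is achieved, Remark~\ref{3} applied to $\cw(V)^{(2)}$ yields hyperreflexivity of the ampliation, and by the definition of $k$--hyperreflexivity this translates directly into $2$--hyperreflexivity of $\cw(V)$, which is the desired conclusion.
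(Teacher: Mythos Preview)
Your overall framework is right, but there is a concrete error in the treatment of the truncated part. You assert that ``each Jordan block individually gives a finite-dimensional hyperreflexive algebra with $\mathbb{A}_1(1)$'' and then feed these into Remark~\ref{3}. This is false: a single nilpotent Jordan block $a_k$ with $k\geqslant 2$ is not even reflexive (for $k=2$ one has $\dim\cw(a_2)=2$ while $\dim\operatorname{Alg\,Lat}\,a_2=3$), hence certainly not hyperreflexive. Consequently Remark~\ref{3} cannot be applied to $\oplus_i\cw(a_{k_i})$, and your argument for $\cw(V_t)$ collapses. This is exactly why $1$--hyperreflexivity of power partial isometries fails in general, and the ampliation must enter \emph{before} you analyse the truncated part, not after.

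The paper fixes this by working from the start with the ampliated pieces $\cw(V_u^{(2)})$, $\cw(V_s^{(2)})$, $\cw(V_c^{(2)})$, $\cw(V_t^{(2)})$. For $V_t^{(2)}$ each block size now occurs at least twice, so in the finite case the Deddens--Fillmore criterion gives reflexivity and finite-dimensionality upgrades it to hyperreflexivity; the infinite case is handled by \cite[Theorem~3.3]{[PP]}. Each ampliated piece is then hyperreflexive with $\mathbb{A}_1(1)$, so by Remark~\ref{3} their direct sum $\cw(V_u^{(2)})\oplus\cw(V_s^{(2)})\oplus\cw(V_c^{(2)})\oplus\cw(V_t^{(2)})$ is hyperreflexive with $\mathbb{A}_1(1)$. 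Since $\cw(V^{(2)})$ sits inside this sum, heredity (Remark~\ref{2}, \cite[Proposition~2.2]{[PP]}) gives hyperreflexivity of $\cw(V^{(2)})$ directly, and \cite[Theorem~3.5]{[KP]} converts this into $2$--hyperreflexivity of $\cw(V)$. In particular there is no need for your ``two slots'' packing argument or for establishing $\mathbb{A}_1(r)$ on $\cw(V)^{(2)}$ itself; the heredity step absorbs all of that.
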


\begin{proof}
Recall that $V=V_u \oplus V_s \oplus V_c \oplus V_t$ (see (\ref{decompo})).
An algebra $\cw(V_u)$ is hyperreflexive (see \cite[Lemma 3.1]{[Ro]}) and has property $\A11$ (see \cite[Proposition 60.1]{[Co2]}). Thus $\cw(V_u^{(2)})$ is hyperreflexive and has property $\A11$ (\cite[Proposition 2.3]{[PP]}). Similarly, since $\cw(V_s)$ is hyperreflexive, $\kappa(\cw(a_s))<11,4$  (see \cite{[Da]}, \cite{[KP1]}) and has property $\A11$ (\cite[Proposition 60.5]{[Co2]}), thus $\cw(V_s^{(2)})$ is hyperreflexive and has property $\A11$ (\cite[Proposition 2.3]{[PP]}).
Recall also that both hyperreflexivity (with the same hyperreflexivity constant) and property $\mathbb{A}_1(1)$ are kept, when we take the adjoint, hence $\cw(V_c^{(2)})$ is hyperreflexive and has property $\A11$ (see also \cite[Proposition 3.1]{[PP]}).
By  \cite[Proposition III.1.21]{[Ber]} we get that $\cw(V_t^{(2)})$ has property $\A11$. If $V_t=\oplus_{i=1}^m a_{k_i}$, then $\cw(V_t^{(2)})$ is reflexive, since the largest block $a_{k_m}$ appears at least twice in the decomposition (\ref{decompo}) (see \cite[Theorem 2]{[DF]}). Thus $\cw(V_t^{(2)})$ is hyperreflexive, since, in such a case, underlying Hilbert space $\h(V)^{(2)}$ is finite dimensional,  see \cite{[KL],[MP1]}. If $V_t=\oplus_{i=1}^{\infty} a_{k_i}$ then $\cw(V_t^{(2)})$ is also hyperreflexive (see \cite[Theorem 3.3, Proposition 3.1]{[PP]}). Let us note that
\begin{align*}
\cw(V^{(2)}) &= \cw(V_u^{(2)} \oplus V_s^{(2)} \oplus V_c^{(2)} \oplus V_t^{(2)}) \\
&\subset \cw(V_u^{(2)})  \oplus \cw(V_s^{(2)}) \oplus \cw(V_c^{(2)}) \oplus \cw(V_t^{(2)})
\end{align*}
and $\cw(V_u^{(2)})  \oplus \cw(V_s^{(2)}) \oplus \cw(V_c^{(2)}) \oplus \cw(V_t^{(2)})$ is hyperreflexive and has property $\A11$ (\cite[Proposition 2.3]{[PP]}). Thus using \cite[Proposition 2.2]{[PP]} we get  hyperreflexivity of $\cw(V^{(2)})$. By \cite[Theorem 3.5]{[KP]} we obtain $2$--hyperreflexivity of $\cw(V)$.
\end{proof}

\begin{remark}
It is worth to note that even if in the proof above we have shown hyperreflexivity
we have not got an estimation of $\kappa_2 (\cw(V))$. The main reason is that it is not known whether $\kappa (\cw(a_k \oplus a_k))$ is bounded independently on $k$.
\end{remark}

\begin{remark}
$2$--reflexivity of a power partial isometry is a weaker version of Theorem \ref{hreppi}. On the other hand, it can be proved directly using the similar technique as in the proof of Theorem \ref{hreppi}.
\end{remark}

\section{Power partial isometries are hyporeflexive}

Hyporeflexivity was introduced in \cite{[HN]}. We say that a commutative algebra $\cw \subset \Bh$ is {\it hyporeflexive} if \[\cw =  \cw' \cap \operatorname{Alg\,Lat} \cw,\] where $\cw'$ denotes a commutant of $\cw$.
Let us note that if a commutative algebra is reflexive, then it is hyporeflexive. An operator $T \in \Bh$ is {\it hyporeflexive}  if $\cw(T)$ is hyporeflexive. Note that every reflexive operator $B\in \Bh$ is hyporeflexive since an algebra $\cw(B)$ is commutative. Let us recall that an operator acting on a finite dimensional Hilbert space is always hyporeflexive (see \cite[Theorem 10]{[BF]}).

Now we prove two technical lemmas.

\begin{lemma}\label{comm}
Let $\h=\oplus_{n \in \mathbb{N}} \h_{n}$ be an orthogonal sum of Hilbert spaces.
Let us consider an operator $\oplus_{n \in \mathbb{N}} A_n \in \oplus_{n \in \mathbb{N}} \b(\h_n)\subset \b(\h)$.
Then \[(\cw(\oplus_{n \in \mathbb{N}} A_n))' \cap (\oplus_{n \in \mathbb{N}} \b(\h_n)) =  (\oplus_{n \in \mathbb{N}} \cw(A_n))' \cap (\oplus_{n \in \mathbb{N}} \b(\h_n)).\]
\end{lemma}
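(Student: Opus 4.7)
The plan is to show that both intersections coincide with the block-diagonal operators $B=\oplus_n B_n$ whose components $B_n$ each commute with $A_n$. Throughout I will repeatedly use the standard fact that the commutant in $\b(\h)$ is WOT-closed, so the commutant of any operator $T$ equals the commutant of $\cw(T)$.

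First I would unwind the left-hand side. Fix $B=\oplus_n B_n \in \oplus_n \b(\h_n)$. Because the commutant is WOT-closed and $\cw(\oplus_n A_n)$ is the WOT-closure of the polynomials in $\oplus_n A_n$, the condition $B\in(\cw(\oplus_n A_n))'$ is equivalent to $B(\oplus_n A_n)=(\oplus_n A_n)B$. Reading this equation block-diagonally yields $B_n A_n=A_n B_n$ for every $n$, that is, $B_n\in\{A_n\}'$ for every $n$.

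Next I would do the analogous computation for the right-hand side. A block-diagonal $B$ lies in $(\oplus_n \cw(A_n))'$ iff $B_n C_n=C_n B_n$ for every $n$ whenever $C=\oplus_n C_n$ with $C_n\in\cw(A_n)$. Since each $\cw(A_n)$ is a subspace of $\b(\h_n)$ (containing $0$), I may insert an arbitrary $C_{n_0}\in\cw(A_{n_0})$ in one slot while putting $0$ in all others; the resulting operator sits in $\oplus_n \cw(A_n)\subset \b(\h)$ with norm $\|C_{n_0}\|$. Letting $n_0$ and $C_{n_0}$ vary, this collapses the commutation condition to $B_n\in(\cw(A_n))'$ for every $n$.

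The two componentwise conditions $B_n\in\{A_n\}'$ and $B_n\in(\cw(A_n))'$ coincide, once more because $\cw(A_n)$ is the WOT-closure of polynomials in $A_n$ and commutants are WOT-closed. This identifies the two intersections with the same set of block-diagonal operators, completing the proof. The only subtlety (rather than a real obstacle) is the bookkeeping around $\oplus_n \cw(A_n)$: one must justify that a single component of an element of this direct sum can be varied freely with the others set to $0$, which is precisely why it matters that each $\cw(A_n)$ is a linear subspace.
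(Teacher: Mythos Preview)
Your proof is correct. The underlying mechanism---that commutants are WOT-closed, so $\{T\}'=(\cw(T))'$---is precisely what the paper exploits as well; the paper just carries out this passage from $A_n$ to $\cw(A_n)$ explicitly, using nets of polynomials one coordinate at a time and then passing to finite and infinite sums, whereas you invoke the principle once and identify both intersections with the common set $\{\oplus_n B_n : B_n\in\{A_n\}'\text{ for all }n\}$. Your route is a bit more economical but not materially different.
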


\begin{proof}
Clearly $\cw (\oplus_{n \in \mathbb{N}} A_n) \subset \oplus_{n \in \mathbb{N}} \cw(A_n)$, thus by \cite[Proposition 12.2]{[Co2]} we get $$(\cw (\oplus_{n \in \mathbb{N}} A_n))' \supset (\oplus_{n \in \mathbb{N}} \cw(A_n))',\quad \text{ so}$$
$$(\cw(\oplus_{n \in \mathbb{N}} A_n))' \cap (\oplus_{n \in \mathbb{N}} \b(\h_n))
\supset (\oplus_{n \in \mathbb{N}} \cw(A_n))' \cap (\oplus_{n \in \mathbb{N}} \b(\h_n)).$$

Let us take
$T=\oplus_{n \in \mathbb{N}} T_n \in (\cw(\oplus_{n \in \mathbb{N}} A_n))'
\cap (\oplus_{n \in \mathbb{N}} \mathbf{B}(\h_n)).$
To prove the converse inclusion
 we should check that
\begin{equation}\label{3.4}
(\oplus_{n \in \mathbb{N}} T_n)(\oplus_{n \in \mathbb{N}} B_n) =
(\oplus_{n \in \mathbb{N}} B_n)(\oplus_{n \in \mathbb{N}} T_n)
\end{equation}
for $\oplus_{n \in \mathbb{N}} B_n \in \oplus_{n \in \mathbb{N}}\cw(A_n)$.

Let $p$ be a polynomial. Then
\begin{align*}
 (\oplus_{n \in \mathbb{N}} T_n)\  (p(A_1)\oplus (\oplus_{n \neq 1} 0))\ =\ &\
(\oplus_{n \in \mathbb{N}} T_n)\  p(\oplus_{n \in \mathbb{N}} A_n) \ (I\oplus (\oplus_{n \neq 1} 0))\\=\ &\ p(\oplus_{n \in \mathbb{N}} A_n)\  (\oplus_{n \in \mathbb{N}} T_n) \ (I\oplus (\oplus_{n \neq 1} 0))\\=\ &\ p(\oplus_{n \in \mathbb{N}} A_n)\  (I\oplus(\oplus_{n \neq 1} 0)) \ (\oplus_{n \in \mathbb{N}} T_n)\\=\ &\
(p(A_1)\oplus(\oplus_{n \neq 1} 0))\  (\oplus_{n \in \mathbb{N}} T_n).
\end{align*}
Let us take $B_1 \in \cw(A_1)$. There is a net of polynomials $p_\eta$ such that  $p_\eta(A_1)$ converges in the weak operator topology to $B_1$.  Then for $\oplus_ {n\in \mathbb{N}}h_n,\, \oplus_ {n\in \mathbb{N}}g_n\in \h$ we have
\begin{align*}
\langle (\oplus_{n \in \mathbb{N}} T_n)&\ (B_1\oplus (\oplus_{n \neq 1} 0))\  (\oplus_{n\in \mathbb{N}} h_n)\,,\, (\oplus_ {n\in \mathbb{N}}g_n) \rangle \\
=\ &\langle (B_1\oplus (\oplus_{n \neq 1} 0))\  (\oplus_{n\in \mathbb{N}} h_n)\, ,\,(\oplus_{n \in \mathbb{N}} T_n^*)\ (\oplus_ {n\in \mathbb{N}}g_n) \rangle\\=\ &\langle B_1 h_1, T_1^*g_1\rangle=\lim \,\langle p_\eta(A_1) h_1, T_1^*g_1\rangle\\=\ &\lim \,\langle (\oplus_{n \in \mathbb{N}} T_n)\ (p_\eta(A_1)\oplus (\oplus_{n \neq 1} 0))\ (\oplus_ {n\in \mathbb{N}}h_n)\, , \,(\oplus_ {n\in \mathbb{N}}g_n)\rangle \\=\ &\lim \,\langle(p_\eta(A_1)\oplus (\oplus_{n \neq 1} 0))\ (\oplus_{n \in \mathbb{N}} T_n)\ (\oplus_ {n\in \mathbb{N}}h_n)\, ,\,(\oplus_ {n\in \mathbb{N}}g_n)\rangle \\=\ &\lim \,\langle  p_\eta (A_1)\;T_1h_1,g_1\rangle =
\langle B_1 T_1h_1,g_1\rangle\\=\ &\langle (B_1\oplus(\oplus_{n \neq 1} 0))\ (\oplus_{n \in \mathbb{N}} T_n)\  (\oplus_{n\in \mathbb{N}} h_n)\, ,\,(\oplus_ {n\in \mathbb{N}}g_n) \rangle .
\end{align*}
Thus (\ref{3.4}) holds for $B_1 \oplus (\oplus_{n \neq 1} 0)$. Hence it is also fulfilled for $\oplus_{n=1}^k B_n \oplus (\oplus_{n >k} 0)$ for any  $k$. Since $\oplus_{n \in \mathbb{N}} B_n$ is the limit of $\oplus_{n=1}^k B_n \oplus (\oplus_{n >k} 0)$ ($k\to\infty$) in the weak and strong operator topology thus, as above, we get (\ref{3.4}) for  $\oplus_{n \in \mathbb{N}} B_n$.
\end{proof}

\begin{lemma} \label{AlgLatW(A+B)}
Let $\h=\oplus_{n \in \mathbb{N}} \h_{n}$ be an orthogonal sum of Hilbert spaces.
Let us consider an operator $\oplus_{n \in \mathbb{N}} A_n \in \oplus_{n \in \mathbb{N}} \b(\h_n)\subset \b(\h)$.
Then \[\operatorname{Alg\,Lat} \cw (\oplus_{n \in \mathbb{N}} A_n) \subset \oplus_{n \in \mathbb{N}} \b(\h_n).\]
\end{lemma}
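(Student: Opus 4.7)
The plan is to observe that every summand $\h_m$ (and each of its complements) sits inside $\operatorname{Lat}\cw(\oplus_{n\in\mathbb{N}} A_n)$, and then to argue that any $T$ that preserves all of these subspaces is automatically block--diagonal with respect to the decomposition $\h=\oplus_{n\in\mathbb{N}}\h_n$.

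First I would fix $m\in\mathbb{N}$ and view $\h_m$ as a closed subspace of $\h$ via the natural embedding. For any polynomial $p$ one has $p(\oplus_{n\in\mathbb{N}}A_n)=\oplus_{n\in\mathbb{N}} p(A_n)$, and this operator clearly sends $\h_m$ into $\h_m$; since $\operatorname{Lat}$ of a set equals $\operatorname{Lat}$ of its WOT closure, it follows that $\h_m\in\operatorname{Lat}\cw(\oplus_{n\in\mathbb{N}} A_n)$. The identical argument applied to $\h_m^\perp=\oplus_{n\neq m}\h_n$ shows that $\h_m^\perp$ is invariant as well.

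Now take $T\in\operatorname{Alg\,Lat}\cw(\oplus_{n\in\mathbb{N}} A_n)$. By the previous step $T(\h_m)\subset\h_m$ and $T(\h_m^\perp)\subset\h_m^\perp$ for each $m$, which is equivalent to saying that $T$ commutes with the orthogonal projection $P_m$ onto $\h_m$. Writing an arbitrary $x\in\h$ as the orthogonal series $x=\sum_{m\in\mathbb{N}} P_m x$ and using boundedness of $T$, one obtains
\[
 Tx=\sum_{m\in\mathbb{N}} TP_m x=\sum_{m\in\mathbb{N}} P_m T x,
\]
so the $m$-th component of $Tx$ equals $(T|_{\h_m})(P_m x)\in\h_m$. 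Hence $T=\oplus_{m\in\mathbb{N}}(T|_{\h_m})$ with $\|T|_{\h_m}\|\le\|T\|$, i.e.\ $T\in\oplus_{n\in\mathbb{N}}\b(\h_n)$.

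I do not expect any real obstacle here: the only point requiring minor care is checking that the bounded operator $T$, once it is known to commute with every projection $P_m$, really reassembles as an orthogonal sum in $\oplus_{n\in\mathbb{N}}\b(\h_n)$ rather than merely as a formal direct sum. This follows immediately from the uniform bound $\|T|_{\h_m}\|\le\|T\|$ and the orthogonality of the decomposition $\h=\oplus_{n\in\mathbb{N}}\h_n$.
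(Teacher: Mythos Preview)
Your proof is correct. The paper takes a slightly different, citation-based route: it uses the inclusion $\cw(\oplus_n A_n)\subset\oplus_n\cw(A_n)$, which reverses under $\operatorname{Lat}$ and hence under $\operatorname{Alg\,Lat}$, and then invokes \cite[Proposition 5.6]{[AKG]} to identify $\operatorname{Alg\,Lat}(\oplus_n\cw(A_n))=\oplus_n\operatorname{Alg\,Lat}\cw(A_n)\subset\oplus_n\b(\h_n)$. Your argument is more elementary and self-contained: you single out the particular subspaces $\h_m$ and $\h_m^\perp$ in $\operatorname{Lat}\cw(\oplus_n A_n)$ and conclude directly that any $T$ in $\operatorname{Alg\,Lat}$ commutes with every coordinate projection $P_m$, hence is block-diagonal. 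The paper's approach is shorter on the page because the heavy lifting is outsourced to \cite{[AKG]}, while yours avoids the external reference at the cost of spelling out a routine verification; both reach the same conclusion with essentially the same underlying idea (namely that each $\h_m$ reduces $\cw(\oplus_n A_n)$).
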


\begin{proof}
Since
$\cw(\oplus_{n \in \mathbb{N}} A_n) \subset \oplus_{n \in \mathbb{N}} \cw (A_n)$,
thus by \cite[Proposition 5.6]{[AKG]} we get
\begin{align*}
\operatorname{Alg\,Lat} \cw (\oplus_{n \in \mathbb{N}} A_n) & \subset \operatorname{Alg\,Lat} (\oplus_{n \in \mathbb{N}} \cw(A_n) ) \\ &= \oplus_{n \in \mathbb{N}} \operatorname{Alg\,Lat} \cw (A_n) \subset \oplus_{n \in \mathbb{N}} \b(\h_n).
\end{align*}
\end{proof}

Now we prove two main theorems of this section.

  \begin{theorem} \label{sumhypo}
Let $\h=\oplus_{n \in \mathbb{N}} \h_{n}$ be an orthogonal sum of Hilbert spaces.
Let $A_n \in \b(\h_n)$ be a hyporeflexive operator
and let $\cw(A_n)$  have property $\A11$ for any $n \in \mathbb{N}$.
Then $\cw(\oplus_{n \in \mathbb{N}} A_n)$ is hyporeflexive.
  \end{theorem}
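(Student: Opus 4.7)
Denote $A := \oplus_n A_n$ and let us prove the nontrivial inclusion $\cw(A)' \cap \operatorname{Alg\,Lat}\cw(A) \subseteq \cw(A)$ (the other is automatic because $\cw(A)$ is a commutative algebra). Pick $T \in \cw(A)' \cap \operatorname{Alg\,Lat}\cw(A)$; the goal is to show $T \in \cw(A)$.

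The plan begins by using the two preceding lemmas to reduce to componentwise information. Lemma \ref{AlgLatW(A+B)}, together with the chain of inclusions $\operatorname{Alg\,Lat}\cw(A) \subseteq \operatorname{Alg\,Lat}(\oplus_n \cw(A_n)) = \oplus_n \operatorname{Alg\,Lat}\cw(A_n)$ obtained in its proof, forces $T = \oplus_n T_n$ with $T_n \in \operatorname{Alg\,Lat}\cw(A_n)$. Lemma \ref{comm} then places $T$ in $(\oplus_n \cw(A_n))'$; testing against operators of the form $0 \oplus \cdots \oplus S_n \oplus \cdots \oplus 0$ with $S_n \in \cw(A_n)$ shows $T_n \in \cw(A_n)'$. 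Hyporeflexivity of $A_n$ therefore yields $T_n \in \cw(A_n)' \cap \operatorname{Alg\,Lat}\cw(A_n) = \cw(A_n)$, so that $T \in \oplus_n \cw(A_n)$.

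The remaining step---and what I expect to be the main obstacle---is to upgrade $T \in \oplus_n \cw(A_n) \cap \operatorname{Alg\,Lat}\cw(A)$ to $T \in \cw(A)$. The inclusion $\cw(A) \subseteq \oplus_n \cw(A_n)$ is typically strict, because members of $\cw(A)$ arise as weak operator limits of a \emph{single} polynomial $p$ acting on every summand simultaneously---a stringent joint-approximation requirement. Property $\A11$ is what I would use to bridge the gap. By \cite[Prop.~2.3]{[PP]} the algebra $\oplus_n \cw(A_n)$ has property $\A11$, and by the heredity result \cite[Prop.~2.2]{[PP]} so does the weak$^*$--closed subalgebra $\cw(A)$; in particular every weak$^*$--continuous functional on $\cw(A)$ is a rank-one vector functional. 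On the other hand, $T \in \operatorname{Alg\,Lat}\cw(A)$ together with the classical rank-one calculation---if $(x\otimes y) \in \cw(A)_{\bot}$ then $y \perp [\cw(A)x]$, while $Tx \in [\cw(A)x]$, so $\langle Tx,y\rangle = 0$---gives $T \in [\cw(A)_{\bot} \cap \F1(\h)]^{\perp}$. Combining these two ingredients via a Longstaff-type duality should yield reflexivity of $\cw(A)$, i.e.\ $\cw(A)_{\bot} = [\cw(A)_{\bot} \cap \F1(\h)]$, and hence $T \in \cw(A)$. The delicate point---and where I expect the real work---is to check that $\A11$ really does close this final gap (rather than, say, only producing a finite-multiplicity version); should reflexivity of $\cw(A)$ turn out to be too strong, one would have to engage the commutant condition $T \in \cw(A)'$ more directly together with $\A11$ to conclude $T \in \cw(A)$.
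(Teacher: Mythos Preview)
Your first stage---using Lemmas~\ref{comm} and~\ref{AlgLatW(A+B)} together with hyporeflexivity of each $A_n$ to obtain
\[
\cw(A)' \cap \operatorname{Alg\,Lat}\cw(A) \subseteq \oplus_n \cw(A_n)
\]
---matches the paper exactly.

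The gap is in the second stage. Your plan is to deduce that $\cw(A)$ is \emph{reflexive} from property $\A11$, and you correctly flag this as suspicious. It is in fact false: property $\A11$ says that every coset in $\B1h/\cw(A)_\bot$ has a rank-one representative, whereas reflexivity requires $\cw(A)_\bot$ itself to be generated by its rank-one elements. These are independent conditions. A single truncated shift $a_k$ ($k\ge 2$) already gives a counterexample: $\cw(a_k)$ has property $\A11$ and is hyporeflexive, but it is not reflexive. Since this is an instance of the theorem (take a single summand), no argument proving reflexivity of $\cw(A)$ can succeed.

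The paper closes the gap by the route you gesture at in your last sentence, but with a key twist: it applies the $\A11$ heredity not to $\cw(A)$ but to the \emph{larger} algebra $\ca:=\cw(A)'\cap\operatorname{Alg\,Lat}\cw(A)$, which we have just shown sits inside $\oplus_n\cw(A_n)$ and hence also inherits $\A11$. The conclusion then follows from \cite[Theorem~6.2(3)]{[HN]}, which says precisely that if $\ca=\cw'\cap\operatorname{Alg\,Lat}\cw$ enjoys (a form of) property $\mathbb{A}_1$, then $\cw$ is hyporeflexive. In other words, the commutant condition is not used in an ad hoc way at the end; it is built into the Hadwin--Nordgren criterion, and the whole point of the first stage was to put $\ca$ inside an algebra with $\A11$ so that this criterion applies.
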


  \begin{proof}
Let us note that
\begin{multline*}
(\cw (\oplus_{n \in \mathbb{N}} A_n))' \cap (\oplus_{n \in \mathbb{N}} \b(\h_n)) \cap \operatorname{Alg\,Lat} \cw (\oplus_{n \in \mathbb{N}} A_n)  \\ \subset
(\cw (\oplus_{n \in \mathbb{N}} A_n))' \cap (\oplus_{n \in \mathbb{N}} \b(\h_n)) \cap \operatorname{Alg\,Lat} (\oplus_{n \in \mathbb{N}} \cw (A_n)).
\end{multline*}
By Lemma \ref{AlgLatW(A+B)} we get that
\[\operatorname{Alg\,Lat} \cw (\oplus_{n \in \mathbb{N}} A_n)\subset \oplus_{n \in \mathbb{N}} \b(\h_n) \] and from \cite[Proposition 5.6]{[AKG]} it follows that
\begin{align*}
\operatorname{Alg\,Lat} (\oplus_{n \in \mathbb{N}} \cw (A_n)) = \oplus_{n \in \mathbb{N}}\operatorname{Alg\,Lat} \cw(A_n)  \subset \oplus_{n \in \mathbb{N}} \b(\h_n),
\end{align*}
thus
\begin{align*}
(\cw (\oplus_{n \in \mathbb{N}} A_n))' & \cap \operatorname{Alg\,Lat} \cw (\oplus_{n \in \mathbb{N}} A_n)   \\ & \subset (\cw (\oplus_{n \in \mathbb{N}} A_n))' \cap \operatorname{Alg\,Lat} (\oplus_{n \in \mathbb{N}} \cw (A_n)).
\end{align*}
Hence, using Lemma \ref{comm} we get
\begin{align*}
(\cw (\oplus_{n \in \mathbb{N}}& A_n))' \cap \operatorname{Alg\,Lat} \cw (\oplus_{n \in \mathbb{N}} A_n)  \\ & \subset (\cw (\oplus_{n \in \mathbb{N}} A_n))' \cap \operatorname{Alg\,Lat} (\oplus_{n \in \mathbb{N}} \cw (A_n))  \\ & = (\cw (\oplus_{n \in \mathbb{N}} A_n))' \cap (\oplus_{n \in \mathbb{N}} \b(\h_n)) \cap \operatorname{Alg\,Lat} (\oplus_{n\in \mathbb{N}} \cw (A_n))\\ &= (\oplus_{n \in \mathbb{N}} \cw(A_n))' \cap (\oplus_{n \in \mathbb{N}} \b(\h_n)) \cap \operatorname{Alg\,Lat} (\oplus_{n \in \mathbb{N}} \cw (A_n)) \\ &= (\oplus_{n \in \mathbb{N}} \cw(A_n))' \cap \operatorname{Alg\,Lat} (\oplus_{n \in \mathbb{N}} \cw (A_n)) = \oplus_{n \in \mathbb{N}} \cw (A_n).
\end{align*}
The last equality follows from the fact that since all operators $A_n$ are hyporeflexive, for any $n \in \mathbb{N}$, thus the algebra $\oplus_{n \in \mathbb{N}} \cw (A_n)$ is also hyporeflexive (see \cite[Theorem 6.2(1)]{[HN]}). Now
each algebra $\cw(A_n)$ has property $\A11$ (for any $n \in \mathbb{N}$),
thus the sum $\oplus_{n \in \mathbb{N}} \cw (A_n)$ has property $\A11$
(\cite[Proposition 2.3]{[PP]}). Property $\A11$ is hereditary (\cite[Proposition 2.2]{[PP]}), thus \[(\cw (\oplus_{n \in \mathbb{N}} A_n))' \cap \operatorname{Alg\,Lat} \cw (\oplus_{n \in \mathbb{N}} A_n)\] has property $\A11$. Using \cite[Theorem 6.2(3)]{[HN]} we get hyporeflexivity of $\cw(\oplus_{n \in \mathbb{N}} A_n)$.
\end{proof}

\begin{theorem}
Let $V\in \Bh$ be a power partial isometry. Then $V$ is hyporeflexive.
\end{theorem}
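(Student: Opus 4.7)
The plan is to mimic the strategy used in the proof of Theorem \ref{hreppi}: decompose $V$ via Theorem \ref{hwdecompo} into $V = V_u \oplus V_s \oplus V_c \oplus V_t$, verify that each summand is hyporeflexive and that its weakly closed algebra has property $\A11$, and then invoke the just-proved Theorem \ref{sumhypo} to conclude that $\cw(V)$ is hyporeflexive. (The index set in Theorem \ref{sumhypo} is $\mathbb{N}$, but its proof works verbatim for finite index sets, or one may pad with trivial summands.)

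For the three ``classical'' pieces, the facts collected in the proof of Theorem \ref{hreppi} already give what I need. The algebra $\cw(V_u)$ is hyperreflexive by \cite[Lemma 3.1]{[Ro]} and has $\A11$ by \cite[Proposition 60.1]{[Co2]}; likewise $\cw(V_s)$ is hyperreflexive by \cite{[Da],[KP1]} with $\A11$ by \cite[Proposition 60.5]{[Co2]}; and $\cw(V_c)$ inherits both properties by passing to adjoints. Since any hyperreflexive algebra is reflexive and every reflexive commutative algebra is hyporeflexive, the summands $V_u,V_s,V_c$ are all hyporeflexive with the required $\A11$ property.

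The truncated-shift part $V_t$ is the one that needs a little extra care, and I expect this to be the main obstacle, since $V_t$ need not even be reflexive. I would split into two cases, exactly as in the proof of Theorem \ref{hreppi}. If $V_t = \oplus_{i=1}^{m} a_{k_i}$ is a finite sum, then $\h_t(V)$ is finite-dimensional, so $V_t$ is hyporeflexive by \cite[Theorem 10]{[BF]}, and $\cw(V_t)$ has $\A11$ by \cite[Proposition III.1.21]{[Ber]}. If instead $V_t = \oplus_{i=1}^{\infty} a_{k_i}$, each $a_{k_i}$ acts on a finite-dimensional space and is therefore hyporeflexive, and each $\cw(a_{k_i})$ has $\A11$; Theorem \ref{sumhypo} then delivers hyporeflexivity of $\cw(V_t)$, and the sum has $\A11$ by \cite[Proposition 2.3]{[PP]}.

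With all four summands hyporeflexive and carrying $\A11$, a final application of Theorem \ref{sumhypo} to the decomposition $V=V_u\oplus V_s\oplus V_c\oplus V_t$ yields hyporeflexivity of $\cw(V)$, which is exactly the statement that $V$ is hyporeflexive. The only subtlety worth flagging is the need to apply Theorem \ref{sumhypo} twice in the infinite truncated-shift case (once to build $V_t$ from the $a_{k_i}$, then once more to assemble $V$ from the four summands); everything else is a direct appeal to previously established results.
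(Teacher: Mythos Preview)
Your proposal is correct and follows essentially the same route as the paper: decompose $V$ via Theorem \ref{hwdecompo}, establish hyporeflexivity and property $\A11$ for each summand, and apply Theorem \ref{sumhypo} to assemble. The only cosmetic differences are that the paper cites reflexivity of $V_u$ and of $a_s$ directly (rather than going through hyperreflexivity) and builds $\cw(V_s)$, $\cw(V_c)$ from copies of $a_s$, $a_c$ via Theorem \ref{sumhypo} instead of invoking hyperreflexivity of the full multiplicity shifts; it also treats the finite and infinite truncated-shift cases uniformly rather than splitting as you do, but your case split is equally valid.
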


\begin{proof}
Let $V=V_u \oplus V_s \oplus V_c \oplus V_t$ (see (\ref{decompo})).
Since $V_u$ is a unitary operator, hence $\cw(V_u)$ is reflexive (see \cite[Theorem 9.21]{[RR]}), thus hyporeflexive and has property $\A11$ by \cite[Proposition 60.1]{[Co2]}.

The unilateral shift $a_s \in \b(l_+^2)$ is reflexive (see \cite{[Sa]}), thus also
hyporeflexive. Recall that $\cw(a_s)$ has property $\A11$ (see \cite[Proposition 60.5]{[Co2]}), thus using Theorem \ref{sumhypo} we get hyporeflexivity of $\cw(V_s)$. It also  has property $\A11$ by \cite[Proposition 2.3]{[PP]}. The backward shift $a_c \in \b(l_+^2)$ is reflexive and has property $\A11$ (since both properties are preserved by taking the adjoint of operator), thus in the same way as above we get hyporeflexivity and property $\A11$ of $\cw(V_c)$.

A truncated shift $a_k \in \b(H_k)$ is hyporeflexive,  since every operator acting on finite dimensional space is hyporeflexive, see \cite[Theorem 10]{[BF]}).
Moreover, $a_k$ has property $\A11$ (\cite[Proposition III.1.21]{[Ber]}).
Hyporeflexivity and property $\A11$ of $\cw (V_t)$ we get from Theorem \ref{sumhypo}.

Since $V=V_u \oplus V_s \oplus V_c \oplus V_t$,  the hyporeflexivity of $\cw(V)$ follows once again from Theorem \ref{sumhypo}.
\end{proof}

{\bf Acknowledgements.}
Research was financed by the Ministry of Science and Higher Education of Republic of Poland.

\end{document}